\documentclass{amsart}
\usepackage{amssymb}
\usepackage{mathtools}
\usepackage{microtype}
\usepackage[svgnames]{xcolor}
\usepackage[unicode,
  colorlinks=true,
  linktocpage=true,
  citecolor=OliveDrab,
  linkcolor=DarkMagenta,
  linkcolor=DarkMagenta,
  pdftitle={Binary partial groups},
  pdfauthor={Philip Hackney, Justin Lynd, and Edoardo Salati}
  ]{hyperref}
\usepackage{tikz-cd}
\usepackage{enumitem}
\usepackage[capitalise]{cleveref}

\newcommand{\fin}{\Upsilon}
\newcommand{\ord}{\Delta}
\DeclareMathOperator{\id}{id}
\newcommand{\op}{\textup{op}}
\newcommand{\set}{\mathsf{Set}}
\newcommand{\sset}{\mathsf{sSet}}
\newcommand{\keq}{\asymp}
\DeclareMathOperator{\sk}{sk}

\usepackage{mathrsfs}
\newcommand{\edgemap}{\mathscr{E}}

\newtheorem{theorem}{Theorem}
\newtheorem{proposition}[theorem]{Proposition}
\newtheorem{lemma}[theorem]{Lemma}
\newtheorem{corollary}[theorem]{Corollary}

\theoremstyle{definition}
\newtheorem{definition}[theorem]{Definition}
\newtheorem{example}[theorem]{Example}

\theoremstyle{remark}
\newtheorem{remark}[theorem]{Remark}

\AddToHook{env/proposition/begin}{\crefalias{theorem}{proposition}}
\AddToHook{env/corollary/begin}{\crefalias{theorem}{corollary}}
\AddToHook{env/lemma/begin}{\crefalias{theorem}{lemma}}
\AddToHook{env/definition/begin}{\crefalias{theorem}{definition}}
\AddToHook{env/example/begin}{\crefalias{theorem}{example}}
\AddToHook{env/remark/begin}{\crefalias{theorem}{remark}}

\subjclass[2020]{Primary: 
20N02, 
08A55, 
20N99. 
Secondary: 
18F20, 
18N50, 
55U10} 

\begin{document}

\title{Binary partial groups}

\author{Philip Hackney}
\address{Department of Mathematics, University of Louisiana at Lafayette}
\email{philip@phck.net} 
\urladdr{http://phck.net}

\author{Justin Lynd}
\address{Department of Mathematics, University of Louisiana at Lafayette}
\email{lynd@louisiana.edu}

\author{Edoardo Salati}
\address{Department of Mathematics, RPTU University Kaiserslautern-Landau}
\email{edoardo.salati@rptu.de}

\date{July 7, 2026}

\thanks{
This work was supported by a grant from the Simons Foundation (\#850849, PH)
and by a grant from the Simons Foundation International (SFI-MPS-TSM-00014188, JL). 
PH was partially supported Louisiana Board of Regents through the Board of Regents Support fund LEQSF(2024-27)-RD-A-31.
}

\begin{abstract}
There are many examples of `binary' partial groups in the literature: sets equipped an identity and a partially-defined binary operation, such that each element admits an inverse.
We show that many of these may be regarded as partial groups in the sense of Chermak, and single out the largest class of such objects.
\end{abstract}

\maketitle

The idea of `partial groups' is nothing new, going back at least three-quarters of a century.
But what do these legacy partial groups have to do with the partial groups introduced by Chermak \cite{Chermak:FSL}? 
In his setup, the choice of set of multipliable words (of arbitrary length) is an essential and subtle part of the structure, not simply derived from the partial binary multiplication.
Lemoine and Molinier gave one answer by showing that Stallings' pregroups comprise a full subcategory of Chermak's partial groups \cite{LemoineMolinier:PGPRFS}.
We go further, showing how to regard a large number of  structures weaker than pregroups as Chermak partial groups.
In fact, we identify the largest class of classical partial groups where this is possible --- the \emph{binary partial groups} (\cref{def bin partial group}), which were studied by Baer and Tamari.

We provide two canonical embeddings from the category of binary partial groups to the category of partial groups, corresponding to the minimal and maximal way to encode a binary partial group as a Chermak partial group.
The small embedding (using the minimal encoding) is shown to induce an equivalence between binary partial groups and a known subcategory, the `2-skeletal partial groups.'
The interpretation of this subcategory relies on viewing a partial group as a special kind of symmetric set, as in \cite{HackneyLynd:PGSSS}; $n$-skeletal partial groups were studied in \cite{HackneyMolinier:DPG} (a specialization of $n$-skeletal symmetric sets \cite{Antokoletz:NAMCHT,HoraKamioMaehara}) and include all partial groups with at most $n$ non-identity elements.

We assume familiarity with simplicial sets and categories.
If $a$ and $b$ are two expressions, which may or may not be defined, we write $a\keq b$ to mean either both $a$ and $b$ are undefined, or both are defined and $a=b$.

\section{Chermak partial groups}\label{sec chermak partial groups}
In \cite{Chermak:FSL}, Chermak introduced a notion of partial group in order to study certain problems in $p$-local group theory. 
The basic definitions may be recast either in the setting of simplicial sets \cite{BrotoGonzalez:ETPG,gonzalez} or in the setting of symmetric sets \cite{HackneyLynd:PGSSS}.

A simplicial set $X \colon \ord^\op \to \set$ is said to be \emph{edgy} if the Segal maps $\edgemap_n \colon X_n \to X_1^{\times n}$ (whose components come from iterated outer face maps) 
are injective for $n\geq 2$.
This structure endows $X_1$ with partially-defined $n$-to-$1$ operations for $n\geq 2$ via the following span
\[ \begin{tikzcd}
X_1^{\times n} & X_n \lar[hook',"\edgemap_n"'] \rar & X_1,
\end{tikzcd} \]
whose the rightmost leg is the composite of inner face maps.
In particular, monoids, groups, and categories all form edgy simplicial sets.

We let $\fin$ denote the category whose objects are the the same as $\ord$ (that is, the sets $[n] = \{0, 1, \dots, n\}$ for $n\geq 0$) and whose morphisms are arbitrary functions.
A symmetric (simplicial) set is a presheaf on $\fin$, i.e.\ a functor $\fin^\op \to \set$.
Restricting along the inclusion $\ord^\op \to \fin^\op$ gives a forgetful functor from symmetric sets to simplicial sets.
A symmetric set $X$ is said to be \emph{spiny} if its underlying simplicial set is edgy (other characterizations are given in \cite[\S3]{HackneyLynd:PGSSS}).
A \emph{partial group} is a spiny symmetric set $X$ which is furthermore reduced (i.e.\ $X_0$ is a one-point set).
The category of partial groups is a full subcategory of (spiny) symmetric sets.

Spiny symmetric sets may instead be fully described in terms of edgy simplicial sets \cite[Theorem 4.6]{HackneyLynd:PGSSS}. 
The forgetful functor from spiny symmetric sets to $\sset$ is fully faithful, and an edgy simplicial set $X$ is in the essential image if and only if it admits a simplicial set map $\dagger \colon X^\op \to X$ such that
\begin{enumerate}
\item $\dagger \dagger^\op = \id_X$, and 
\item if $x\in X_n$ satisfies $\edgemap_n(x) = (a_1,\dots, a_n)$, then there is an element $y \in X_{2n}$ such that $\edgemap_{2n}(y) = (a_n^\dagger, \dots, a_1^\dagger, a_1,\dots, a_n)$ and the total product of $y$ is a degenerate 1-simplex.
\end{enumerate}
In \cite{BrotoGonzalez:ETPG,gonzalez} it is shown that the category of \emph{reduced} edgy simplicial sets satisfying the preceding conditions is equivalent to Chermak's category of partial groups from \cite{Chermak:FSL}. 

\section{Binary partial groups}

A unital partial magma\footnote{A (nonempty) set equipped with a partially-defined binary operation has gone by a number of names, including add \cite{MR30953}, pree \cite{MR971352}, monoid \cite{MR393288}, partial groupoid \cite{Evseev:SPG} or pargoid \cite{MR1465438}, halfgroupoid \cite{MR93552}, and partial magma \cite{MR1727714}.
In the present decade folks seem to be using the term `partial magma,' and we do the same.} is a set $P$ equipped with a partially defined binary operation $P\times P \nrightarrow P$ and an element $1$ such that $a1\keq a \keq 1a$ for all $a\in P$.
The study of unital partial magmas where each element admits an inverse goes back at least to Baer \cite{MR30953,MR38974,MR38975}.

\begin{definition}\label{def bin partial group}
A \emph{binary partial group} is a unital partial magma $P$ such that for each $a\in P$ there exists $a^\dagger \in P$ satisfying both of the following for all $b\in P$:
\begin{enumerate}
		\item If $ab$ is defined then $a^\dagger(ab) \keq b$. \label{bpg left}
		\item If $ba$ is defined then $(ba)a^\dagger \keq b$. \label{bpg right}
\end{enumerate}
\end{definition}

This is simply a convenient way to phrase the definition, but we could also separate \eqref{bpg left} and \eqref{bpg right} for example, or we could insist that $a^\dagger$ is unique.
Assume there is an $a^\dagger$ such that \eqref{bpg left} holds and an $a^\ddagger$ such that \eqref{bpg right} holds.
Then \eqref{bpg right} gives $a a^\ddagger = (1a)a^\ddagger = 1$, while \eqref{bpg left} gives $a^\dagger = a^\dagger 1 = a^\dagger(aa^\ddagger) = a^\ddagger$.
Thus we were required to use the same element in both parts.
Moreover, a binary partial group possesses a \emph{unique} function $\dagger \colon P \to P$ making the conditions hold.

\begin{remark}
A partial magma Baer called `self-reflexive in the strict sense' \cite[\S4]{MR30953} is a binary partial group; Tamari's term was `symmetrical monoid' \cite{Tamari:PAMPMG,MR393288}.
\end{remark}

The following was known to Tamari \cite[p.14]{Tamari:PAMPMG}, but we include a proof as the one we've seen in the literature \cite[1.2]{MR938449} uses associativity axiom \eqref{assoc ternary} below.

\begin{lemma}\label{lem anti-auto}
In a binary partial group, $\dagger$ is an involutive anti-isomorphism, i.e. $(a^\dagger)^\dagger = a$ and $(ab)^\dagger \keq b^\dagger a^\dagger$.
\end{lemma}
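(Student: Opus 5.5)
The plan is to first record a few elementary consequences of \cref{def bin partial group}, then get involutivity from the uniqueness of $\dagger$ by a single evaluation, and finally prove the anti-homomorphism property by pushing a product through conditions \eqref{bpg left} and \eqref{bpg right}, using involutivity for the converse direction.

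\textbf{Preliminaries.} Taking $b=1$ in \eqref{bpg right} gives $aa^\dagger = (1a)a^\dagger = 1$, since $1a = a$ is defined. Similarly, taking $b = 1$ in \eqref{bpg left} gives $a^\dagger a = a^\dagger(a1) = 1$. So $aa^\dagger = 1 = a^\dagger a$ for every $a$.

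\textbf{Involutivity.} Apply \eqref{bpg left} to the element $a^\dagger$, whose inverse is $(a^\dagger)^\dagger$, with $b = a$. The product $a^\dagger a = 1$ is defined, so $(a^\dagger)^\dagger(a^\dagger a) \keq a$. The left side is $(a^\dagger)^\dagger 1 = (a^\dagger)^\dagger$, which is defined by unitality. Hence $(a^\dagger)^\dagger = a$. No appeal to uniqueness of $\dagger$ is actually needed here; only the single element $a^\dagger$ supplied by the definition is used.

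\textbf{Anti-homomorphism, defined case.} Suppose $c = ab$ is defined.
\begin{itemize}
\item By \eqref{bpg left}, $a^\dagger c \keq b$. Since $b$ is defined, $a^\dagger c = b$.
\item Apply \eqref{bpg right} to the element $c$ with left factor $a^\dagger$: since $a^\dagger c$ is defined, $(a^\dagger c)c^\dagger \keq a^\dagger$. Thus $bc^\dagger = a^\dagger$, and in particular it is defined.
\item Apply \eqref{bpg left} to $b$: since $bc^\dagger$ is defined, $b^\dagger(bc^\dagger) \keq c^\dagger$. This says $b^\dagger a^\dagger$ is defined and equals $c^\dagger = (ab)^\dagger$.
\end{itemize}

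\textbf{Anti-homomorphism, converse.} Suppose $b^\dagger a^\dagger$ is defined. Apply the defined case to the pair $(b^\dagger, a^\dagger)$. It shows that $(a^\dagger)^\dagger(b^\dagger)^\dagger = ab$ is defined, by involutivity. Hence $ab$ and $b^\dagger a^\dagger$ are either both undefined or both defined, and in the latter case they agree by the defined case. This gives $(ab)^\dagger \keq b^\dagger a^\dagger$.

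The only delicate point is the bookkeeping of definedness. Each application of \eqref{bpg left} or \eqref{bpg right} requires knowing that the inner product is defined, and it yields definedness of the outer one only because the right-hand side of $\keq$ is a genuine element. The chain in the defined case is ordered precisely so that each step supplies the definedness needed by the next. The converse direction is where involutivity is essential, so that step must come first.
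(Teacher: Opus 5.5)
Your proof is correct and follows the paper's argument essentially step for step: involutivity from $(a^\dagger)^\dagger(a^\dagger a) = a$, the defined case via the chain $a^\dagger(ab) = b$, $b(ab)^\dagger = a^\dagger$, $b^\dagger\bigl(b(ab)^\dagger\bigr) = (ab)^\dagger$, and the converse by applying this to the pair $(b^\dagger, a^\dagger)$ together with involutivity. Your explicit derivation of $aa^\dagger = 1 = a^\dagger a$ and your tracking of definedness at each step just make precise what the paper leaves implicit.
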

\begin{proof}
Involutivity is as one would expect: $a = a^{\dagger\dagger} (a^\dagger a) = a^{\dagger \dagger}(1) = a^{\dagger \dagger}$.
Assume $pq$ is defined.
By \eqref{bpg left} we have $p^\dagger(pq) = q$.
Apply \eqref{bpg right} with $b= p^\dagger$ and $a = pq$ to see $q(pq)^\dagger = (p^\dagger(pq))(pq)^\dagger = p^\dagger$.
Apply \eqref{bpg left} with $a=q$ and $b = (pq)^\dagger$ to see $ (pq)^\dagger = q^\dagger (q (pq)^\dagger) = q^\dagger p^\dagger$.
Thus $ab$ defined implies $(ab)^\dagger = b^\dagger a^\dagger$.
If $b^\dagger a^\dagger$ is defined, we then have 
$(b^\dagger a^\dagger)^\dagger = (a^\dagger)^\dagger (b^\dagger)^\dagger = ab$, 
which suffices for the other direction.
\end{proof}

Stallings' pregroups \cite{MR255689,Stallings:GT3DM} are a special kind of binary partial group, as are a wide variety of subsequent generalizations (see \cite[\S4]{MR1826094} for an overview\footnote{These generalized Stallings' pregroups all have additional axioms beyond \eqref{assoc ternary} and \eqref{two-sided inverse} which guarantee embeddability in a group.}).
The common generalization of all of these is a unital partial magma satisfying two conditions.
The first is a reasonable associativity condition for a partial magma:
\begin{enumerate}[label=\texttt{A\arabic*}., ref=\texttt{A\arabic*}, start=3]
\item If $ab$ and $bc$ are defined, then $(ab)c \keq a(bc)$. 
\label{assoc ternary}
\end{enumerate}
The second is a two-sided inverse property.
\begin{enumerate}[label=\texttt{I\arabic*}., ref=\texttt{I\arabic*}, start=2]
\item For each $a\in P$ there exists an element $a^\dagger $ such that $aa^\dagger \keq 1 \keq a^\dagger a$. \label{two-sided inverse}
\end{enumerate}
Baer showed that if $P$ is a unital partial magma satisfying \eqref{assoc ternary} and where each element admits a right inverse, then $P$ is a binary partial group \cite[p.627]{MR38974}.
A binary partial group need not satisfy \eqref{assoc ternary}, though of course it will always satisfy \eqref{two-sided inverse}.

\begin{example}
The `regular partial groups' from \cite{Jekel:PGEA} are binary partial groups as \eqref{two-sided inverse} is an axiom, and regularity guarantees \eqref{assoc ternary}.
The `partial groups' from \cite{MR188332} are pregroups, hence binary partial groups.
\end{example}

\section{The big embedding of binary partial groups}

Let $P$ be a binary partial group.
Define a simplicial set $BP$ by letting $BP_n \subseteq P^{\times n}$ be the set of length $n$ words $w$ such that 1) each full parenthesization of $w$ gives a valid iterated multiplication in $P$, and 2) the values of all of these multiplications are the same.
In particular, $BP_0 = \ast$, $BP_1 = P$, and $BP_2 \subseteq P\times P$ is the set of multipliable pairs.
It is clear that $BP$ possesses inner face maps (using multiplication in $P$ at some position) and degeneracy maps (by placing the unit into some position), and that these again land in $BP$.
It also has outer face maps by tossing away the first or last elements; this function landing in $BP$ depends on the cancellation property.

\begin{remark}
In the special case when $P$ is a pregroup in the sense of Stallings, this construction agrees with the partial group from \cite[\S2.3]{LemoineMolinier:PGPRFS}.
Their description looks a bit different since they make reference to the fact that $P$ embeds in a group, which is not generally true for binary partial groups.
\end{remark}

If $w = (a_1, \dots, a_n) \in P^{\times n}$, we write $w^\dagger = (a_n^\dagger, \dots, a_1^\dagger) \in P^{\times n}$.
Our next lemma shows $w\in BP_n$ if and only if $w^\dagger \in BP_n$.
For this we need a minor preliminary.
If $\mu \colon P^{\times n} \nrightarrow P$ is a partial function arising from multiplication specified by a full parenthesization of $n$ letters, then we write $\bar \mu \colon P^{\times n} \nrightarrow P$ for the partial function arising from the mirrored parenthesization, 
e.g.\ $a(b(cd))$ and $((ab)c)d$ are mirrors, as are 
$(a((bc)d))(ef)$ and $(ab)((c(de))f)$.
Inductively we have that if $\mu(v)$ is defined, then $\mu (v) = \bar\mu(v^\dagger)^\dagger$. 
This is clear if $v$ has length 1, since $\mu (v) = v = \bar\mu(v)$.
For longer words, splitting at the last multiplication we have $\mu (v) \keq \mu_1(v_1) \mu_2(v_2)$ where $v_1, v_2$ are words of (shorter) positive length, $v=v_1v_2$ is their concatenation, and $\mu_1$, $\mu_2$ are already known to satisfy the condition.
But then
\[
	\mu (v) = \mu_1(v_1) \mu_2(v_2) = \bar\mu_1(v_1^\dagger)^\dagger \bar\mu_2(v_2^\dagger)^\dagger = \left[ \bar\mu_2(v_2^\dagger) \bar\mu_1(v_1^\dagger) \right]^\dagger = \left[ \bar\mu(v_2^\dagger v_1^\dagger) \right]^\dagger = \bar\mu(v^\dagger)^\dagger.
\] 

\begin{lemma}\label{inversion closure}
If $w\in BP_n$, then $w^\dagger \in BP_n$.
In this case, if $\mu(w) = a$ then $\mu(w^\dagger) = a^\dagger$.
\end{lemma}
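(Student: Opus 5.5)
The idea is to run the mirror identity $\mu(v) = \bar\mu(v^\dagger)^\dagger$, established just above, in reverse. Its proof gives a little more than it states. It shows that whenever $\mu(v)$ is defined, $\bar\mu(v^\dagger)$ is also defined. Indeed, at each inductive step the product $\bar\mu_2(v_2^\dagger)\bar\mu_1(v_1^\dagger)$ is defined by \cref{lem anti-auto}: $\mu_1(v_1)\mu_2(v_2)$ is defined, so $(\mu_1(v_1)\mu_2(v_2))^\dagger \keq \mu_2(v_2)^\dagger\mu_1(v_1)^\dagger$ is defined. I will therefore first record this strengthened statement: if $\mu(v)$ is defined, then $\bar\mu(v^\dagger)$ is defined and equals $\mu(v)^\dagger$.

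Now let $w\in BP_n$ with common value $a$. Let $\nu$ be any full parenthesization of $n$ letters. The mirror operation is an involution on parenthesizations, so $\nu = \bar\mu$ for $\mu = \bar\nu$. Since $w\in BP_n$, $\mu(w)$ is defined and equals $a$. By the strengthened identity, $\nu(w^\dagger) = \bar\mu(w^\dagger)$ is defined and equals $\mu(w)^\dagger = a^\dagger$.

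Hence every parenthesization of $w^\dagger$ is defined, and all of them take the same value $a^\dagger$. This gives $w^\dagger\in BP_n$, and the total product of $w^\dagger$ is $a^\dagger$. Here I use that the $\dagger$ applied to the letters is the one from \cref{def bin partial group}, consistent with the notation $w^\dagger$.

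The only point needing care is the definedness claim inside the inductive mirror argument. The paper's displayed chain quietly uses the ``$\keq$'' in \cref{lem anti-auto} in the direction ``$xy$ defined $\Rightarrow$ $y^\dagger x^\dagger$ defined''. I will make that explicit, and also note that $\bar\mu_i(v_i^\dagger)$ are defined by the inductive hypothesis. Everything else is bookkeeping with the involution $\mu\mapsto\bar\mu$.
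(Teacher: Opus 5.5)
Your proposal is correct and follows essentially the same route as the paper: the mirror identity $\mu(v)=\bar\mu(v^\dagger)^\dagger$, combined with the involutivity of $\mu\mapsto\bar\mu$, transports both definedness and values from $w$ to $w^\dagger$. Your explicit check that the inductive mirror argument also gives definedness of $\bar\mu(v^\dagger)$ (via the $\keq$ in \cref{lem anti-auto}) is what the paper's chain $\mu_1(v)\keq\bar\mu_1(w)^\dagger\keq\bar\mu_2(w)^\dagger\keq\mu_2(v)$ tacitly relies on, and you also spell out the value $a^\dagger$, which the paper leaves implicit.
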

\begin{proof}
Let $v = w^\dagger$, and $\mu_1$, $\mu_2$ two parenthesizations of $n$ letters.
By the above consideration, and since $\bar \mu_1(w) = \bar \mu_2(w)$ as $w\in BP_n$, we have
\[
	\mu_1 (v) \keq \bar \mu_1(w)^\dagger \keq \bar \mu_2(w)^\dagger \keq \mu_2(v),
\]
so $v\in BP_n$.
\end{proof}

\begin{theorem}
If $w\in BP_n$ then, then $w^\dagger w \in BP_{2n}$.
\end{theorem}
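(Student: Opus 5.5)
The plan is to prove a stronger statement by induction, since an arbitrary parenthesization of $w^\dagger w$ splits into pieces that are no longer of the form $v^\dagger v$. Write $w=(a_1,\dots,a_n)\in BP_n$. For $0\le p<q\le n$ let $\pi(p,q)\in P$ be the common value of all parenthesized products of the contiguous subword $(a_{p+1},\dots,a_q)$, and set $\pi(p,p)=1$. This is well defined because contiguous subwords of $w$ lie in $BP$: they are iterated outer faces of $w$. The values are also compatible with splitting, $\pi(p,r)=\pi(p,q)\pi(q,r)$ for $p\le q\le r$, because a parenthesization of $(a_{p+1},\dots,a_r)$ whose last multiplication is at $q$ is a full parenthesization.

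For $0\le i,j\le n$ put $u_{i,j}=(a_i^\dagger,\dots,a_1^\dagger,a_1,\dots,a_j)$. I will show by induction on $i+j$ that $u_{i,j}\in BP_{i+j}$, with every parenthesization having value $\pi(i,j)$ if $i\le j$ and $\pi(j,i)^\dagger$ if $i\ge j$. The theorem is the case $i=j=n$, where the value is $\pi(n,n)=1$.

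For the inductive step, fix a parenthesization $\mu$ of $u_{i,j}$ and split at its last multiplication. First suppose the split falls inside or at the end of the dagger block. Then the left factor is $(a_i^\dagger,\dots,a_{k+1}^\dagger)$ for some $0\le k<i$. By \cref{inversion closure}, every parenthesization of this factor is defined with value $x^\dagger$, where $x=\pi(k,i)$. The right factor is $u_{k,j}$, and by induction its value is $\pi(k,j)$ or $\pi(j,k)^\dagger$. It remains to show that $x^\dagger$ times this value is defined and equals the claimed answer. There are three subcases.
\begin{itemize}
\item If $k\le i\le j$: we have $\pi(k,j)=x\,\pi(i,j)$, so \eqref{bpg left} gives $x^\dagger(x\,\pi(i,j))=\pi(i,j)$.
\item If $k\le j\le i$: we have $x=\pi(k,j)\pi(j,i)$, so \cref{lem anti-auto} gives $x^\dagger=\pi(j,i)^\dagger\pi(k,j)^\dagger$. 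Then \eqref{bpg right} gives $x^\dagger\pi(k,j)=\pi(j,i)^\dagger$.
\item If $j<k$: by \cref{lem anti-auto}, $x^\dagger\pi(j,k)^\dagger\keq(\pi(j,k)\,x)^\dagger=\pi(j,i)^\dagger$, and the right-hand side is defined.
\end{itemize}

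If instead the split falls inside the undaggered block, I reduce to the previous case by symmetry. Apply the dagger/mirror correspondence $\mu(v)=\bar\mu(v^\dagger)^\dagger$ established before \cref{inversion closure}. We have $u_{i,j}^\dagger=u_{j,i}$, computed with respect to the same word $w$, and the mirrored parenthesization $\bar\mu$ splits $u_{j,i}$ inside its dagger block. Since $\bar\mu(u_{j,i})$ has already been shown to be defined with the claimed value, so is $\mu(u_{i,j})$, and the value is its dagger, which matches because the claimed values for $(i,j)$ and $(j,i)$ are daggers of each other. The base cases, where one block is empty or $i+j\le 1$, are immediate. When $i=0$ or $j=0$ the word is a contiguous subword of $w$ or of $w^\dagger$, so \cref{inversion closure} handles it.

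The main obstacle I expect is the bookkeeping. One must check that every subcase really produces a product that is \emph{defined}, not merely one satisfying $\keq$, and that the splitting identities $\pi(p,r)=\pi(p,q)\pi(q,r)$ are used only for genuine contiguous segments of $w$. This requires stating the inductive hypothesis carefully enough that the symmetry step is legitimate.
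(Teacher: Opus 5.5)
Your argument is correct, and you induct over the same family of words as the paper. For $i\le j$ your $u_{i,j}$ is the paper's $v^{(i)}$ with $v=(a_1,\dots,a_j)$, and for $i\ge j$ it is the dagger of such a word. The organization differs, though.

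The paper runs a double induction on the length of the word and on $0\le k\le n$, and only ever treats words whose dagger block is no longer than the undaggered block. It computes all three kinds of splits directly: inside the dagger block, inside the undaggered block before position $k$, and at or after $k$. Because of this normalization, each product it evaluates either has the form $y^\dagger(yz)$, settled by \eqref{bpg left}, or lies inside $w$. \Cref{inversion closure} is used only to read off the values of mirrored factors.

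You instead fix $w$ once, encode everything through $\pi(p,q)$, and induct only on $i+j$ over the symmetric family. This lets you compute directly only the splits that fall in the dagger block. You transfer the remaining splits through $\mu(v)\keq\bar\mu(v^\dagger)^\dagger$ and $u_{i,j}^\dagger=u_{j,i}$.

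The price is that you must handle the dagger-heavy words $u_{i,j}$ with $i>j$ head-on. That is why your subcases also need \eqref{bpg right} and \cref{lem anti-auto}. You also need to settle the dagger-block case for all pairs at a fixed level $i+j$ before invoking the mirror step. This ordering is legitimate, since that case only uses lower levels, but you should state it explicitly rather than leave it implicit in ``has already been shown.''
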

\begin{proof}
Using double induction on $n$ and $0 \leq k \leq n$, we prove the following statement: if $w = (a_1, \dots, a_n) \in BP_n$, then $w^{(k)} = (a_k^\dagger, \dots, a_1^\dagger, a_1, \dots, a_n)$ is in $BP_{n+k}$ and has total product equal to $a_{k+1}\cdots a_n$.
Notice that this statement implies, by \cref{inversion closure}, that $(a_n^\dagger, \dots, a_1^\dagger, a_1, \dots, a_k) \in BP_{n+k}$ with total product $(a_{k+1}\cdots a_n)^\dagger$.
It is clear that the statement is true when $n=1$ or $k=0$.

Fix $n > 1$ and $1 \leq k \leq n$ such that the statement holds for words of length $1 \leq m < n$, and for $w^{(h)}$ when $w$ is a length $n$ word and $0 \leq h < k$.
We break $w^{(k)}$ into two positive length subwords in every possible way, and notice by induction that each side has a unique product.
For $0\leq j < k$ we have (by \eqref{bpg left} in \cref{def bin partial group})
\begin{align*}
	(a_k^\dagger \cdots a_{j+1}^\dagger)(a_j^\dagger \cdots a_1^\dagger a_1 \cdots a_n) &= (a_{j+1}\cdots a_k)^\dagger (a_{j+1} \cdots a_n)\\
	&= (a_{j+1}\cdots a_k)^\dagger ((a_{j+1} \cdots a_k)(a_{k+1} \cdots a_n)) 
\\
	&= (a_{k+1} \cdots a_n),
\end{align*}
for $1\leq i < k$ we have
\begin{align*}
	(a_k^\dagger \cdots a_1^\dagger a_1 \cdots a_i)(a_{i+1}\cdots a_n) &= (a_{i+1}\cdots a_k)^\dagger (a_{i+1} \cdots a_n)\\
	&= (a_{i+1}\cdots a_k)^\dagger ((a_{i+1} \cdots a_k)(a_{k+1} \cdots a_n)) 
\\
	&= (a_{k+1} \cdots a_n),
\end{align*}
and for $k \leq \ell < n$ we have
\[
(a_k^\dagger \cdots a_1^\dagger a_1 \cdots a_\ell)(a_{\ell+1}\cdots a_n) = (a_{k+1} \cdots a_\ell)(a_{\ell+1}\cdots a_n) = (a_{k+1} \cdots a_n).
\]
Thus every full parenthesization of $w^{(k)}$ is multipliable, and all have the same value.
The theorem statement is the special case $n=k$.
\end{proof}

As the total product of $w^\dagger w$ is $1$, we deduce from \S\ref{sec chermak partial groups}:

\begin{corollary}
$BP$ is a Chermak partial group. \qed
\end{corollary}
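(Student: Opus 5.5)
We will verify the criterion from \S\ref{sec chermak partial groups}, which characterizes partial groups as reduced edgy simplicial sets admitting a map $\dagger\colon BP^\op \to BP$ satisfying the two listed conditions.

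The first step is to confirm that $BP$ is a simplicial set at all. Inner faces and degeneracies clearly preserve $BP$, as remarked. For outer faces, take $w=(a_1,\dots,a_n)\in BP_n$ and consider $d_0 w=(a_2,\dots,a_n)$. For any full parenthesization $\nu$ of $n-1$ letters, form the parenthesization $a_1\,\nu(\cdot)$ of $w$. It is defined and equals the common value $c$, so $\nu(a_2,\dots,a_n)$ is defined. By \eqref{bpg left}, $\nu(a_2,\dots,a_n)\keq a_1^\dagger c$, which is independent of $\nu$. The last face $d_n$ is handled symmetrically using \eqref{bpg right}, or by applying \cref{inversion closure}.

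Next, $BP$ is reduced, since $BP_0=\ast$. It is also edgy, because $BP_n\subseteq P^{\times n}$ and the Segal map $\edgemap_n$ is exactly this inclusion: iterated outer faces of $w$ return its letters.

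Then define $\dagger\colon BP^\op\to BP$ by $w\mapsto w^\dagger$. This lands in $BP$ by \cref{inversion closure}. It is a simplicial map from the opposite simplicial set because reversing a word turns face $d_i$ into $d_{n-i}$ and degeneracy $s_i$ into $s_{n-i}$. Inner faces are compatible by \cref{lem anti-auto}, since $(a_ia_{i+1})^\dagger=a_{i+1}^\dagger a_i^\dagger$. Degeneracies are compatible since $1^\dagger=1$. Involutivity, $\dagger\dagger^\op=\id$, follows from $a^{\dagger\dagger}=a$.

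The second condition, which is the main content, is the preceding theorem. For $x=w\in BP_n$ with $\edgemap_n(w)=(a_1,\dots,a_n)$, the word $y=w^\dagger w$ lies in $BP_{2n}$ and has the required spine. Its total product is $1$: this is the $k=n$ case of the induction, whose total product is the empty product $a_{n+1}\cdots a_n=1$, and $1$ is the degenerate $1$-simplex $s_0(\ast)$.

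The only step needing real care is the outer-face check, that is, that $BP$ is closed under $d_0$ and $d_n$. This is where cancellation enters. Everything else reduces to bookkeeping combined with the cited results.
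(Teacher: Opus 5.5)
Your proposal is correct and takes essentially the paper's route: the corollary is deduced from the criterion of \S\ref{sec chermak partial groups}, with \cref{inversion closure} and \cref{lem anti-auto} supplying the involution $\dagger$, and the preceding theorem (the total product of $w^\dagger w$ is $1$) supplying condition (2). You also make explicit the routine checks the paper calls clear, notably that $BP$ is closed under outer faces via \eqref{bpg left} and that $\dagger$ is compatible with faces and degeneracies.
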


A map of binary partial groups is a (total) function respecting the partial binary multiplication.
That is, if $ab$ is defined then $f(ab) \keq f(a)f(b)$.
By cancellation, such a function automatically preserves the unit and inverses.
The assignment $P \mapsto BP$ is functorial: a map of binary partial groups $f \colon P \to Q$ induces the entrywise function $P^{\times n} \to Q^{\times n}$; to define the simplicial map $Bf \colon BP \to BQ$ it suffices to show that $BP_n \subseteq P^{\times n} \to Q^{\times n}$ lands in $BQ_n$.
This is the case, as for any multiplication $\mu \colon P^{\times n} \nrightarrow P$, preservation of products implies $f(\mu(w)) = \mu(f(w))$ ($w\in BP_n)$, so all multiplications of $f(w)$ exist and are equal.
Hence $f(w) \in BQ_n$.

As indicated in \S\ref{sec chermak partial groups}, given any edgy simplicial set $X$, there is a canonical partial magma structure on $X_1$ given by the span 
\[ X_1 \times X_1 \hookleftarrow X_2 \to X_1.\]
In other words, $xy$ is defined if and only if $(x,y) \in X_1 \times X_1$ is the spine of a 2-simplex $z\in X_2$, in which case $xy$ is defined to be the long edge of $z$ (i.e.\ $xy \coloneq d_1(z)$).
If $X$ is reduced, then $X_1$ is a unital partial magma.

\begin{proposition}
If $X$ is a Chermak partial group, then $X_1$ equipped with its  partial binary multiplication is a binary partial group.
\end{proposition}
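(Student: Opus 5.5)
We work with the description of a Chermak partial group $X$ as a reduced spiny symmetric set, so that $X_n$ can be identified, via the injective Segal map $\edgemap_n$, with a set $D_n\subseteq X_1^{\times n}$ of multipliable words. The symmetric structure supplies the inversion $\dagger\colon X_1\to X_1$, induced by the transposition of $[1]$. The plan is to show that this $\dagger$ witnesses both conditions of \cref{def bin partial group}.

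First we check that $X_1$ is a unital partial magma, as noted just before the statement. The unit $1$ is the degenerate 1-simplex $s_0(\ast)$. For any $a\in X_1$, the degenerate 2-simplices $s_0a$ and $s_1a$ have spines $(1,a)$ and $(a,1)$ and long edge $a$. Hence $1a$ and $a1$ are both defined and equal to $a$.

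Next, suppose $ab$ is defined, so there is a 2-simplex $z$ with $\edgemap_2(z)=(a,b)$ and $d_1 z = ab$. Using the symmetric structure, apply the map $X_2\to X_2$ induced by the transposition $(0\,1)$ of $[2]$; call the result $z'$. Tracking edges, the edge $0\to1$ of $z'$ is the reversed edge $0\to 1$ of $z$, namely $a^\dagger$. The edge $1\to 2$ of $z'$ is the edge $0\to2$ of $z$, namely $ab$. The edge $0\to2$ of $z'$ is the edge $1\to 2$ of $z$, namely $b$.

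So $z'$ is a 2-simplex with spine $(a^\dagger, ab)$ and long edge $b$, which shows that $a^\dagger(ab)$ is defined and equals $b$. This single computation is the main step. One must confirm carefully that the action of $\fin$ on edges, via the maps $[1]\to[2]$ composed with the permutation, gives exactly these identifications, including that reversing an edge is $\dagger$. That verification is routine precomposition of functions $[1]\to[2]$.

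Condition \eqref{bpg right} follows the same way, using the transposition $(1\,2)$. From a 2-simplex with spine $(b,a)$ and long edge $ba$, it produces a 2-simplex with spine $(ba, a^\dagger)$ and long edge $b$.

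Finally, for the $\keq$ statements we need definedness to be automatic. The argument above already gives definedness whenever $ab$ (respectively $ba$) is defined, and that is all the conditions require. Well-definedness of products uses injectivity of $\edgemap_2$: if $(x,y)$ is a spine, the 2-simplex is unique, so $xy$ is well defined. This completes the plan.
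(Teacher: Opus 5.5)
Your proof is correct, but it takes a different route from the paper.

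The paper works in the simplicial description from \S\ref{sec chermak partial groups}. It uses condition (2) there to get $(a^\dagger,a)\in X_2$ with product $1$ and $(b^\dagger,a^\dagger,a,b)\in X_4$. It then passes to the face $(a^\dagger,a,b)\in X_3$ and computes $a^\dagger(ab)=d_1d_2(a^\dagger,a,b)=d_1d_1(a^\dagger,a,b)=d_1(1,b)=b$.

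You instead let the transposition $\sigma=(0\,1)$ of $[2]$ act directly on the 2-simplex with spine $(a,b)$. Your edge bookkeeping checks out. Write $\tau$ for the transposition of $[1]$ and $\delta_{ij}\colon[1]\to[2]$ for the edge $i\to j$. Then $\sigma\delta_{01}=\delta_{01}\tau$, $\sigma\delta_{12}=\delta_{02}$ and $\sigma\delta_{02}=\delta_{12}$, so the new simplex has spine $(a^\dagger,ab)$ and long edge $b$. The analogous computation with $(1\,2)$ handles the other condition.

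What your route buys is that it never leaves dimension $2$. It uses only reducedness, the $\fin_{\leq 2}$-part of the symmetric structure, and injectivity of $\edgemap_2$. So it shows directly that any reduced $\fin_{\leq 2}$-presheaf with injective $\edgemap_2$ already yields a binary partial group on its $1$-simplices. This fits naturally with \cref{thm 2skel binary}.

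The paper's route needs $3$- and $4$-simplices and the identity $d_1d_2=d_1d_1$. In exchange, it uses only the underlying edgy simplicial set together with condition (2) of \S\ref{sec chermak partial groups}. That condition is essentially Chermak's axiom that $w\in D$ implies $w^{-1}w\in D$ with product $1$. So the paper's argument transfers verbatim to Chermak's original formulation without appealing to the action of non-monotone maps on $2$-simplices.
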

\begin{proof}
If $a\in X_1$ then we have $(a^\dagger, a), (a,a^\dagger) \in X_2 \subseteq X_1\times X_1$, both having product $1$.
If $(a,b) \in X_2$ (i.e., $ab$ is defined) then $(b^\dagger, a^\dagger, a, b) \in X_4$, so $(a^\dagger, a, b) \in X_3$.
We then have 
\[
	a^\dagger(ab) = d_1 d_2(a^\dagger, a, b) = d_1 d_1 (a^\dagger, a, b) = d_1(a^\dagger a, b) = d_1(1,b) = b.
\]
A mirrored argument shows that if $ba$ is defined then $(ba)a^\dagger = b$.
\end{proof}

Write $T$ for the functor from Chermak partial groups to binary partial groups given in the proposition.
It is immediate that $TB = \id$.

\begin{theorem}\label{unit nat trans}
The functor $T$ is left adjoint to the functor $B$ from binary partial groups to Chermak partial groups, with unit $\eta \colon \id \Rightarrow BT$ given by the Segal maps.
\end{theorem}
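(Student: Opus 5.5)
We prove the adjunction $T \dashv B$ by exhibiting the unit $\eta \colon \id \Rightarrow BT$ and the counit $\varepsilon \colon TB \Rightarrow \id$, the latter being the identity since $TB = \id$, and then checking the triangle identities.

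\textbf{Step 1: the unit is well defined.} Let $X$ be a Chermak partial group, viewed as a reduced edgy simplicial set, and let $x \in X_n$ with $\edgemap_n(x) = (a_1,\dots,a_n)$. We must show that this word lies in $(BTX)_n$. That is, every full parenthesization of $(a_1,\dots,a_n)$ is defined in the partial magma $TX = X_1$, and all of them equal the total product (long edge) of $x$.

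We argue by induction on $n$. A full parenthesization splits at its last multiplication as $\mu_1(a_1,\dots,a_k)\,\mu_2(a_{k+1},\dots,a_n)$. Applying iterated outer faces to $x$ gives simplices with spines $(a_1,\dots,a_k)$ and $(a_{k+1},\dots,a_n)$. By induction, $\mu_1$ and $\mu_2$ evaluate to their respective long edges $p$ and $q$.

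We then need the $2$-simplex obtained from $x$ by the face map induced by $\{0,k,n\} \hookrightarrow [n]$. Its spine is $(p,q)$ and its long edge is the long edge of $x$. Hence $pq$ is defined in $X_1$ and equals the total product of $x$. This also handles $n \le 1$, and it uses only simplicial identities and edginess (edginess lets us identify simplices with their spines).

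\textbf{Step 2: $\eta$ is a map of symmetric sets.} By \cite[Theorem 4.6]{HackneyLynd:PGSSS} the forgetful functor to $\sset$ is fully faithful, so it suffices to check that $\eta$ is a simplicial map. Outer faces and degeneracies commute with $\edgemap$ by inspection. For an inner face $d_i$, the spine of $d_i x$ replaces $(a_i,a_{i+1})$ by the long edge of the $2$-simplex face on $\{i-1,i,i+1\}$, which is $a_ia_{i+1}$ in $TX$. This matches the inner face of $BTX$.

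Naturality of $\eta$ in $X$ is immediate, because maps commute with faces and hence with Segal maps.

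\textbf{Step 3: triangle identities.} The first identity is $T\eta_X = \id_{TX}$: in degree $1$, $\eta$ is the identity of $X_1$, and the multiplication is the same in both. The second is $\eta_{BP} = \id_{BP}$: the Segal map of $BP \subseteq P^{\times n}$ is the inclusion itself.

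Alternatively, one can verify the bijection $\mathrm{Hom}(TX,P) \cong \mathrm{Hom}(X,BP)$ directly. Given $f \colon TX \to P$, the composite $Bf \circ \eta_X$ is the unique map whose degree-$1$ part is $f$. Uniqueness holds because maps into the edgy simplicial set $BP$ are determined on $1$-simplices.

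\textbf{Main obstacle.} The main obstacle is Step 1: showing that an arbitrary simplex of $X$ yields a word all of whose parenthesizations agree. The key input is the $\{0,k,n\}$-face together with the inductive identification of the long edges. Everything else is bookkeeping.
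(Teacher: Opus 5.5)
Your proposal is correct and follows essentially the same route as the paper: show that the Segal maps land in $BTX$, check that $\eta_X$ is a simplicial (hence partial group) map, take the identity counit coming from $TB=\id$, and verify the two reduced triangle identities $T(\eta_X)=\id_{TX}$ and $\eta_{BP}=\id_{BP}$. Your induction using the face on $\{0,k,n\}$ simply spells out the paper's one-line remark that every parenthesization is a composite of inner face maps, and that all such composites agree on $\edgemap_n(X_n)$ by the simplicial identities.
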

\begin{proof}
Let $X$ be a Chermak partial group.
First notice that $\edgemap_n \colon X_n \hookrightarrow X_1^{\times n}$ lands in $BTX_n$.
Indeed, a multiplication map $\mu \colon X_1^{\times n} \to X_1$ is nothing but a composite of partial inner face maps, which are total on $\edgemap_n(X_n)$, and which all coincide by the simplicial identities in $X$.
A quick check gives that $X \to BTX$ is a map of simplicial sets, hence a map of Chermak partial groups.
Thus the Segal maps define a natural transformation $\eta$.
The counit $\varepsilon \colon TB \Rightarrow \id$ is the identity, so the triangle identities reduce to $T(\eta_X) = \id_{TX}$ and $\eta_{BP} = \id_{BP}$.
These both hold since $\eta$ is the identity in simplicial degree 1.
\end{proof}

Since the counit of the adjunction is an isomorphism, we have \cite[Lemma 4.5.13]{Riehl:CTC}:

\begin{corollary}
The functor $B$ from binary partial groups to Chermak partial groups is fully faithful. \qed
\end{corollary}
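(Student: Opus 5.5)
The plan is to deduce full faithfulness directly from the adjunction $T \dashv B$ of \cref{unit nat trans}, using the standard fact that a right adjoint is fully faithful exactly when the counit is an isomorphism. Since the statement is marked with \qed and attributed to \cite[Lemma 4.5.13]{Riehl:CTC}, the proof is essentially a one-line citation; still, I would spell out the mechanism to make it self-contained.

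The key step is to write the hom-set map concretely. For binary partial groups $P,Q$, the function
\[
B_{P,Q}\colon \hom(P,Q)\to \hom(BP,BQ)
\]
factors through the adjunction bijection
\[
\hom(BP,BQ)\cong \hom(TBP,Q).
\]
Under this bijection, $Bf$ corresponds to $\varepsilon_Q\circ TBf$. Because $TB=\id$ and $\varepsilon$ is the identity transformation, this composite is simply $f$. So the composite
\[
\hom(P,Q)\to\hom(BP,BQ)\cong\hom(P,Q)
\]
is the identity, which already shows that $B_{P,Q}$ is injective.

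For surjectivity, I would argue directly. Take $g\colon BP\to BQ$ and set $f = Tg$, which is the degree-1 component $g_1\colon P\to Q$. I then need $Bf=g$, and both are simplicial maps between edgy simplicial sets with the same degree-1 component. By naturality of the Segal maps, $\edgemap_n\circ g_n = g_1^{\times n}\circ \edgemap_n$, and in $BP$ and $BQ$ the Segal maps are the inclusions $BP_n\subseteq P^{\times n}$. Hence $g_n$ is the restriction of $g_1^{\times n}$, which is exactly $(Bf)_n$.

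The only point requiring care is this last uniqueness argument: a map between edgy simplicial sets is determined by its 1-simplices because the Segal maps are injective. I expect this to be routine rather than a real obstacle. Once it is in place, the general lemma about counit isomorphisms gives the result, and so does the direct check above.
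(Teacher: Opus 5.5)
Your proposal is correct and is essentially the paper's argument: the paper simply invokes the standard fact that a right adjoint whose counit is invertible is fully faithful, and your first computation is exactly that fact unpacked for $T \dashv B$. That computation already gives bijectivity, not just injectivity: if $\phi\colon \hom(BP,BQ)\to\hom(P,Q)$ denotes the adjunction bijection, then $\phi\circ B_{P,Q}=\id$ forces $B_{P,Q}=\phi^{-1}$, so your Segal-map surjectivity check is correct but serves as an independent, adjunction-free confirmation rather than a needed step.
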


As a special case, we recover \cite[Proposition 2.23]{LemoineMolinier:PGPRFS} which states that the category of Stallings' pregroups embeds fully faithfully into that of Chermak partial groups.

\section{The small embedding of binary partial groups}

The defining multiplication in a binary partial group takes a \emph{pair} of elements and possibly produces their product, yet $BP$ is built by considering iterated versions of those products, in all possible orders.
As this is a bit complicated, we provide an alternative embedding which keeps the focus on pairs and constructs an object of dimension at most 2.

The 2-skeleton of a symmetric set $X$ is the smallest symmetric subset $\sk_2 X \subseteq X$ containing all of $X_0$, $X_1$, and $X_2$.
A symmetric set $X$ is called 2-skeletal if $\sk_2 X = X$.
The category of 2-skeletal symmetric sets is equivalent to the category of $\fin_{\leq 2}$-presheaves, where $\fin_{\leq 2} \subset \fin$ is the full subcategory on the objects $[0], [1], [2] \in \fin$.
The 2-skeleton of a (Chermak) partial group is again a partial group.
If $X = \sk_2 X$ is a 2-skeletal symmetric set, then $X$ is spiny if and only if $\edgemap_2 \colon X_2 \to X_1\times X_1$ is injective \cite[Theorem 9]{HackneyMolinier:DPG}.

Recall the underlying binary partial group functor $T$.
Precomposing with the endofunctor $\sk_2$ on Chermak partial groups, we have $T\circ \sk_2 = T$.

\begin{definition}
The \emph{small embedding} of binary partial groups into Chermak partial groups is the composite functor $B' = \sk_2 \circ B$.
\end{definition}

The functor $B'$ is left adjoint to $T$.
It induces the following equivalence of categories.

\begin{theorem}\label{thm 2skel binary}
The category of 2-skeletal Chermak partial groups is equivalent to the category of binary partial groups.
\end{theorem}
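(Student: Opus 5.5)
The plan is to show that the functors $B' = \sk_2 \circ B$ and $T$, restricted to 2-skeletal Chermak partial groups, are mutually inverse equivalences. One composite is immediate. We have $T\circ \sk_2 = T$ and $TB = \id$, so $TB' = T\sk_2 B = TB = \id$ on binary partial groups. Moreover $B'P$ is 2-skeletal by construction, and it is a Chermak partial group because the 2-skeleton of a partial group is again a partial group. So $B'$ lands in the subcategory of 2-skeletal partial groups.

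For the other composite, fix a 2-skeletal Chermak partial group $X$. I would use the unit $\eta_X \colon X \to BTX$ from \cref{unit nat trans}, which is given by the Segal maps. Since $X = \sk_2 X$ and $\sk_2$ is functorial with respect to symmetric subobjects, $\eta_X$ factors through $\sk_2 BTX = B'TX$; this gives a natural map $X \to B'TX$, and I will show it is an isomorphism. It is injective in every degree, because the Segal maps of a spiny object are injective. It remains to prove surjectivity, and it suffices to check this in degrees $0,1,2$, since $B'TX$ is generated as a symmetric set by its simplices in those degrees. In degree $0$ both sides are a point. In degree $1$ the map is the identity $X_1 \to TX_1 = X_1$. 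In degree $2$ we have $BTX_2 = \{(a,b) : ab \text{ defined in } TX\}$, and by the definition of the multiplication on $TX$ this is exactly $\edgemap_2(X_2)$. So the map is bijective in degrees $\le 2$.

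The remaining point, and the one I expect to need the most care, is that a map of symmetric sets which is bijective in degrees $\le 2$, between 2-skeletal objects, is an isomorphism. I would use the equivalence between 2-skeletal symmetric sets and $\fin_{\leq 2}$-presheaves. Under this equivalence a map is determined by its restriction to $\fin_{\leq 2}$, and a map that is an isomorphism of $\fin_{\leq 2}$-presheaves is an isomorphism. There is one subtlety: the symmetric structure on $B'TX$ must agree with the one on $X$ under the comparison. This is automatic, because the forgetful functor from spiny symmetric sets to simplicial sets is fully faithful, so $\eta_X$, being a simplicial map between spiny objects, is a map of symmetric sets. Naturality of $\eta$ then gives $X \cong B'TX$ naturally, completing the equivalence. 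As a byproduct, this also exhibits $B'$ as left adjoint to $T$ via restriction of the previous adjunction.
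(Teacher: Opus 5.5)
Your proof is correct and is essentially the paper's: $TB' = TB = \id$, and for 2-skeletal $X$ the whiskered unit $X = \sk_2 X \to B'TX$ is bijective in degrees $\le 2$ because $BTX_2 = \edgemap_2(X_2)$, hence an isomorphism. Your argument via levelwise injectivity of the Segal maps plus generation by low-degree simplices is just a more hands-on version of the paper's appeal to the equivalence with $\fin_{\leq 2}$-presheaves.

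One small caveat on your closing aside, which is not needed for the theorem. Restricting $T \dashv B$ to 2-skeletal objects only yields $T|_{\text{2-skel}} \dashv B'$. To get $B' \dashv T$ on all Chermak partial groups you also need that $\sk_2$ is right adjoint to the inclusion of 2-skeletal objects, together with $T \circ \sk_2 = T$.
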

\begin{proof}
Notice $TB' = T\circ (\sk_2 \circ B) = (T\circ \sk_2) \circ B = TB = \id$.
The natural transformation $\eta \colon \id \Rightarrow BT$ from \cref{unit nat trans} whiskers to a natural transformation $\eta' \coloneq \sk_2 \circ \eta \colon \sk_2 \Rightarrow \sk_2 \circ (BT) = B'T$.
If $X$ is 2-skeletal, then $\eta'_X \colon X = \sk_2 X \to B'T X$ is a map which is given by the identity in level 1 and $\edgemap_2 \colon X_2 \to BTX_2 \subseteq X_1 \times X_1$ in level 2.
But $BTX_2$ consists of the multipliable pairs of elements in $TX$, which is precisely the image of $\edgemap_2 \colon X_2 \to X_1 \times X_1$.
Thus $\eta'_X$ is an isomorphism in levels 0, 1, and 2. 
As $\eta'_X$ is a map between 2-skeletal symmetric sets, it is an isomorphism.
\end{proof}

To summarize, we have the following diagram of categories and functors.
\[ \begin{tikzcd}[column sep=large]
& \text{2-skeletal partial groups} \dar[shift right=2, hook'] \dar[phantom, "\dashv"{scale=0.7}]
\\
\text{Binary partial groups} \rar[shift right=2, hook,"B"'] \urar[end anchor={west}, bend left=20, "B'=\sk_2 \circ B", "\simeq"'] \rar[phantom,"\perp"{scale=0.6}] 
& \text{Chermak partial groups} \lar[shift right=2, "T"'] \uar[shift right=2,"\sk_2"']
\end{tikzcd} \]
As a curiosity: for each $2 < n < \infty$, the functor $\sk_n \circ B$ is also a fully faithful embedding of binary partial groups into Chermak partial groups.
 
\begin{remark}
The chief use of symmetric sets in this paper is \cref{thm 2skel binary}, as the 2-skeleton we use is the \emph{symmetric} notion.
Indeed, there is only one 2-skeletal \emph{simplicial} set which is a Chermak partial group (the trivial group).
\end{remark}

\begin{remark}
Suppose $P$ is a partial magma and $X$ is a Chermak partial group such that $X_1 = P$ where the underlying partial binary operations agree.
Then $P$ is a binary partial group, since $P$ is equal to the partial magma $TX$.
\end{remark}

\bibliographystyle{amsplain}
\bibliography{binary_partial_groups}

\end{document}